\newtheorem{theorem}{Theorem}[section]
\newtheorem*{Acknowledgement}{\textnormal{\textbf{Acknowledgement}}}
\theoremstyle{definition}
\newtheorem{definition}[theorem]{Definition}
\newtheorem{example}[theorem]{Example}
\newtheorem{Open Prob}[theorem]{Open Problem}
\theoremstyle{remark}
\numberwithin{equation}{section}
\def\DJ{\leavevmode\setbox0=\hbox{D}\kern0pt\rlap{\kern.04em\raise.188\ht0\hbox{-}}D}
\begin{document}

\title[On A Novel Approach to Nonexpansive Mappings]{ On A Novel Approach to Nonexpansive Mappings }

\author[A.\ Banerjee, H.\ Garai, P.\ Mondal, L.K.\ Dey]
{Anish Banerjee$^{1}$, Hiranmoy Garai$^{2}$, Pratikshan Mondal$^{3}$, Lakshmi Kanta Dey$^{1}$}

\address{{$^{1}$}   Anish Banerjee,
                    Department of Mathematics,
                    National Institute of Technology
                    Durgapur, West Bengal,  India}
                    \email{anish22.mathematics@gmail.com}
\address{{$^{2}$}   Hiranmoy Garai,
                    Department of Mathematics,
                    Government General Degree College Ranibandh, 
                    Bankura, West Bengal, India}
                    \email{hiran.garai24@gmail.com} 
\address{{$^{3}$}   Pratikshan Mondal,
                    Department of Mathematics,
                    A.B.N. Seal College, 
                    Cooch Behar, West Bengal, India}
                    \email{real.analysis77@gmail.com}                   
\address{{$^{1}$}   Lakshmi Kanta Dey,
                    Department of Mathematics,
                    National Institute of Technology
                    Durgapur, West Bengal, India}
                    \email{lakshmikdey@yahoo.co.in}

\keywords{Fixed point theorems, nonexpansive mappings, perimetric nonexpansive mappings. \\
\indent 2020 {\it Mathematics Subject Classification}. $47$H$09$, $47$H$10$, $54$H$25$.}

\begin{abstract}
This paper seeks to advance the theory of nonexpansive mappings by introducing and exploring a novel class of nonexpansive type mappings, which we aptly designate as perimetric nonexpansive mappings. We establish that the collection of mappings we propose is considerably larger than the existing classes of nonexpansive and quasi-nonexpansive mappings. We also establish fixed point existence findings by examining the connection between periodic points and fixed points in the context of normed linear spaces. Finally, we establish a significant result by proving that every perimetric nonexpansive mapping on a closed bounded convex subset of a Hilbert space necessarily has a fixed point.
\end{abstract}

\maketitle
\setcounter{page}{1}

		
\section{Introduction}
\noindent
The concept of contraction mappings was introduced by Banach \cite{B22} in 1922, and the notion of nonexpansive mappings was developed independently by Browder \cite{B65}, Göhde \cite{G65}, and Kirk \cite{K65} in 1965.  In a metric space $(X,d)$, a mapping $T:X\to X$ is called a contraction  if for a constant $\alpha\in (0,1]$, $d(Tx,Ty)\leq \alpha d(x,y)$ holds for all $x,y\in X$ and $T$ is called nonexpansive if $d(Tx,Ty)\leq  d(x,y)$ holds for all $x,y\in X.$
Subsequently, a significant amount of work has been developed by mathematicians to extend and generalize the notions of contraction and nonexpansive mappings and these developments drew upon two key mathematical research areas,   namely, fixed point theory of contractions type mappings and fixed point theory of nonexpansive type mappings. Fixed point theory of nonexpansive type mappings have been explored in two main directions: one by generalizing contraction type mappings to nonexpansive type mappings and the other by directly extending and generalizing nonexpansive mappings themselves. Of these two directions, the first one holds particular significance since nonexpansive mappings were developed as a generalization of Banach's contraction mapping.  As a result, many nonexpansive type mappings have been introduced as generalizations of their contraction counterparts, with most research focused on normed linear spaces.

Recently in $2023$, Petrov \cite{P23}  presented a new class of contraction type mapping precisely defined as: 
\begin{definition}\cite[\, p. 2, Definition~2.1]{P23}\label{D11}
Let $(M,d)$ be a metric space with at least three points. Then  $T: M \to M$ is said to be a mapping contracting perimeters of triangles on $M$ if there exists $\alpha \in [0,1)$ such that the inequality
    \begin{equation}\label{mcpt1}
        d(Tx,Ty)+ d(Ty,Tz)+ d(Tz,Tx) \le \alpha (d(x,y)+ d(y,z)+ d(z,x))
        \end{equation}
        holds for all three pairwise distinct points $x, y, z \in M$.
\end{definition}
Petrov established an existence result for fixed points for this class of mappings, which can be stated as follows:

\begin{theorem}\cite[\, p. 3, Theorem~2.4]{P23}\label{T12}
Let $(M,d)$, $|M| \geq 3$, be a complete metric space and $T:X\to X$ be a mapping contracting perimeters of triangle on $M$. Then $T$ has a fixed point if and only if $T$ does not possess periodic points of prime period $2$. The number of fixed points is at most two.
\end{theorem}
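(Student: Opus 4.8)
The plan is to split the statement into three parts: the two directions of the equivalence and the cardinality bound, treating the forward (existence) direction as the substantial one.

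For the easy implications I would argue as follows. Suppose first that $T$ has a fixed point $a$ and, toward a contradiction, also a periodic point $p$ of prime period $2$, so that $Tp = q$ and $Tq = p$ with $p \neq q$. Since $a = Ta$ while $p,q$ are genuinely periodic, the three points $a,p,q$ are pairwise distinct, and feeding them into inequality \eqref{mcpt1} yields $d(a,q)+d(q,p)+d(p,a) \le \alpha\big(d(a,p)+d(p,q)+d(q,a)\big)$; as the two perimeters coincide and are strictly positive, this forces $\alpha \ge 1$, a contradiction. The same device settles the cardinality bound: three distinct fixed points $a,b,c$ would give $d(a,b)+d(b,c)+d(c,a) \le \alpha\big(d(a,b)+d(b,c)+d(c,a)\big)$, again impossible, so there can be at most two fixed points.

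The heart of the matter is to show that the \emph{absence} of prime-period-$2$ points forces the existence of a fixed point. I would fix any $x_0$ and form the Picard iterates $x_{n+1} = Tx_n$. If two consecutive iterates coincide we already have a fixed point, so assume $x_n \neq x_{n+1}$ for every $n$; the crucial observation is that $x_n = x_{n+2}$ would make $x_n$ a point of prime period $2$, which the hypothesis excludes, so $x_n \neq x_{n+2}$ as well. Consequently the iterates $x_n, x_{n+1}, x_{n+2}$ are always pairwise distinct, and applying \eqref{mcpt1} to them shows that the consecutive perimeters $p_n := d(x_n,x_{n+1})+d(x_{n+1},x_{n+2})+d(x_{n+2},x_n)$ obey $p_{n+1} \le \alpha p_n$, whence $p_n \le \alpha^n p_0$. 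Since $d(x_n,x_{n+1}) \le p_n \le \alpha^n p_0$ is summable, $\{x_n\}$ is Cauchy and, by completeness, converges to some $x^*$.

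It remains to identify $x^*$ as a fixed point, and this is the step I expect to require the most care, since a mapping contracting perimeters is not assumed continuous. I would argue directly: if $x_n = x^*$ for infinitely many $n$, then along that subsequence $Tx^* = x_{n+1} \to x^*$ forces $Tx^* = x^*$; otherwise $x_n \neq x^*$ for all large $n$, so the triples $x^*, x_n, x_{n+1}$ are eventually pairwise distinct and \eqref{mcpt1} gives $d(Tx^*, x_{n+1}) + d(x_{n+1},x_{n+2}) + d(x_{n+2}, Tx^*) \le \alpha\big(d(x^*,x_n)+d(x_n,x_{n+1})+d(x_{n+1},x^*)\big)$. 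Letting $n \to \infty$ collapses the right-hand side to $0$ and the left-hand side to $2\,d(Tx^*, x^*)$, so $Tx^* = x^*$. Combining the three parts yields the stated equivalence together with the bound of two fixed points; the delicate points throughout are the systematic verification of pairwise distinctness that legitimizes each invocation of \eqref{mcpt1}, and the bookkeeping in the final limit that sidesteps any continuity hypothesis on $T$.
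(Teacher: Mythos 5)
The paper quotes this theorem from \cite{P23} without supplying a proof of its own, so there is no internal argument to compare against; your proof is correct and is essentially Petrov's original one (Picard iteration, the geometric decay $p_{n+1}\le\alpha p_n$ of consecutive triangle perimeters, and the careful pairwise-distinctness checks that legitimize each use of \eqref{mcpt1}, including the final limit step that avoids assuming continuity of $T$).
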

Bey et al. \cite{BPS25} presented the contractive version of perimetric contractions, which is formulated as follows:

\begin{definition}\cite[\, p. 7, Definition~3.1]{BPS25}\label{D13}
Let $(M,d)$ be a metric space with at least three points. Then  $T: M \to M$ is said to be a mapping contracting perimeters of triangles in the sense of Edelstein on $M$ if the inequality
    \begin{equation}\label{mcpte}
        d(Tx,Ty)+ d(Ty,Tz)+ d(Tz,Tx) < (d(x,y)+ d(y,z)+ d(z,x))
        \end{equation}
        holds for all three pairwise distinct points $x, y, z \in M$.
\end{definition}
Furthermore, Bey et al. established an existence result for the aforementioned mappings, which is presented below:
\begin{theorem}\cite[\, p. 7, Theorem 3.3]{BPS25}\label{T14}
Let $X$ be a metric space and let $T: X \to X$ be a mapping contracting perimeters of triangles in the sense of Edelstein. If $T$ does not possess periodic points of prime period $2$ and there exists an $x \in X$ such that the sequence $\{T^n x\}$ has a convergent subsequence which converges to $z \in X$, then $z$ is a fixed point of $T$. The number of fixed points of $T$ is at most two.
\end{theorem}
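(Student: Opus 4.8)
The plan is to mimic Edelstein's classical argument for contractive mappings, replacing the distance $d(x_n,x_{n+1})$ along an orbit by the perimeter of the triangle formed by three consecutive iterates. Write $x_n = T^n x$ and set
\[
p_n = d(x_n,x_{n+1}) + d(x_{n+1},x_{n+2}) + d(x_{n+2},x_n).
\]
Before anything else I would record a continuity lemma, since the defining inequality \eqref{mcpte} constrains three points at once and does not obviously control a single pair $d(Tu,Tv)$. To see that $T$ is continuous at a non-isolated point $y$, take $y_m \to y$ and a fixed auxiliary point $w \neq y$; applying \eqref{mcpte} to the (eventually pairwise distinct) triple $y_m,y,w$ and using $d(Ty,Tw)+d(Tw,Ty_m)\ge d(Ty_m,Ty)$ gives $2\,d(Ty_m,Ty) < d(y_m,y)+d(y,w)+d(w,y_m)$, so that $\limsup_m d(Ty_m,Ty)\le d(y,w)$. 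Since $w\neq y$ may be chosen with $d(y,w)$ arbitrarily small ($y$ being non-isolated), this forces $Ty_m\to Ty$; at isolated points continuity is automatic.

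Next I would analyse the orbit $\{x_n\}$. If two consecutive iterates ever coincide, say $x_n=x_{n+1}$ or $x_{n+1}=x_{n+2}$, then $T$ has a fixed point on the orbit and the orbit is eventually constant, whence the subsequential limit $z$ equals that fixed point and we are done; if $x_n=x_{n+2}$ with $x_n\neq x_{n+1}$, then $x_n$ is a periodic point of prime period $2$, contradicting the hypothesis. So I may assume every triple $\{x_n,x_{n+1},x_{n+2}\}$ is pairwise distinct, and then \eqref{mcpte} yields $p_{n+1}<p_n$ for all $n$. Being strictly decreasing and bounded below by $0$, the sequence $\{p_n\}$ converges to some $p\ge 0$.

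Now I would bring in the convergent subsequence $x_{n_k}\to z$. By the continuity lemma, $x_{n_k+1}\to Tz$ and $x_{n_k+2}\to T^2z$, so $p_{n_k}\to d(z,Tz)+d(Tz,T^2z)+d(T^2z,z)$ while $p_{n_k+1}\to d(Tz,T^2z)+d(T^2z,T^3z)+d(T^3z,Tz)$. Since $\{p_{n_k}\}$ and $\{p_{n_k+1}\}$ are both subsequences of the convergent sequence $\{p_n\}$, each of these limits equals $p$. If $z,Tz,T^2z$ were pairwise distinct, \eqref{mcpte} applied to them would put the second limit strictly below the first, contradicting their equality; hence two of $z,Tz,T^2z$ coincide. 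The case $z=T^2z$, $z\neq Tz$ is a prime period $2$ point and is excluded, while $z=Tz$ is exactly the desired conclusion. The remaining case $Tz=T^2z$ with $z\neq Tz$ is the point that needs the most care: here $u:=Tz$ is a fixed point, so the first limit equals $2\,d(z,u)>0$ whereas the second equals $0$, and since both must equal $p$ we obtain $p=0=2\,d(z,u)$, i.e. $z=u=Tz$, a contradiction. Thus in every surviving case $z=Tz$.

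Finally, the bound on the number of fixed points is immediate: if $a,b,c$ were three distinct fixed points, applying \eqref{mcpte} to them gives $d(a,b)+d(b,c)+d(c,a)<d(a,b)+d(b,c)+d(c,a)$, which is absurd; hence $T$ has at most two fixed points. I expect the continuity lemma and the reconciliation of the two subsequential limits of $\{p_n\}$ (which is what rules out the troublesome case $Tz=T^2z\neq z$) to be the only genuinely delicate steps; everything else is bookkeeping.
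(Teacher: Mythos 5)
Your proof is correct and complete; the paper only quotes Theorem \ref{T14} from \cite{BPS25} without reproducing its proof, so there is nothing in the source to compare against, but your argument is the standard Edelstein-type adaptation (strictly decreasing perimeters $p_n$ along the orbit, continuity of $T$ via the three-point inequality, and reconciliation of the two subsequential limits of $\{p_n\}$) that such results rest on. The one step with no analogue in the classical two-point Edelstein theorem --- the case $Tz=T^2z\neq z$ --- is the genuinely new point, and you dispose of it correctly by noting that it forces $p=2\,d(z,Tz)$ and $p=0$ simultaneously.
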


This new class of contraction type mappings has been extensively studied by some mathematicians in diverse directions, as seen in \cite{ PB24, PP24,  BMD25, BMD24, P25}. However, these studies have been restricted to the fixed point theory of contraction type mappings only. Given the current importance of fixed point theory for nonexpansive type mappings, it's natural to wonder if a nonexpansive counterpart of this contraction type mapping can be developed. Building on these insights,  in this article, we develop the nonexpansive analogue of the mapping contracting perimeters of triangles, which we designate as perimetric nonexpansive mappings. As the majority of nonexpansive type mapping research has centered on normed linear spaces, we develop perimetric nonexpansive mappings within this framework, rather than in the more general context of metric spaces. Furthermore, we develop key properties concerning the existence of fixed points for these new perimetric nonexpansive mappings in the context of Banach and Hilbert spaces. More precisely, we prove fixed point existence theorems by investigating the relationship between periodic and fixed points of mappings in Banach spaces and additionally, we demonstrate that every perimetric nonexpansive mapping defined on a closed bounded convex subset of a Hilbert space has a fixed point.

\section{Perimetric nonexpansive mappings}
We begin this section by giving the formal definition of perimetric nonexpansive mappings.
\begin{definition}\label{D21}
Let $(X,d)$ be a normed linear space. A mapping $T: X \to X$ is said to be a perimetric nonexpansive mapping if  the following holds for all distinct points $x,y,z \in X$:
\begin{align}
\lVert Tx-Ty \rVert+ \lVert Ty-Tz \rVert+ \lVert Tz-Tx \rVert
\le \lVert x-y \rVert+ \lVert y-z \rVert+ \lVert z-x \rVert. \label{pnm}
\end{align}
\end{definition}
It follows from the definition that every nonexpansive mapping is a perimetric nonexpansive mapping, but the converse implication does not hold. This statement is supported by the subsequent example.

\begin{example}\label{E22}
Let us consider the normed linear space $(\mathbb{R}^2,\|\cdot\|)$  equipped with the norm ${\lVert (u,v) \rVert}_1 = |u| + |v|$ and consider  a subset $K=\{(0,0), (1,0), (0,1)\} \cup \{\alpha (1,1): \alpha \geq 1\}$ of $\mathbb{R}^2$ . We define a mapping $T:K \to K$ by 
\begin{align*}
T(u,v)= 
\begin{cases}
(u,v), &\text{if $(u,v)\in \{(0,0)\} \cup \{\alpha (1,1): \alpha \geq 1\}$};\\ 
(1,1), &\text{if $(u,v)\in \{(1,0), (0,1)\}$}.
\end{cases}
\end{align*}

\medskip

For any $x, y, z \in K$, consider the following cases:

\medskip

Case 1: Let $x=(0,0), y=(1,0), z=(0,1)$.
Then
\begin{align*}
  {\lVert Tx-Ty \rVert}_1 + {\lVert Ty-Tz \rVert}_1 + {\lVert Tz-Tx \rVert}_1 = 4 ={\lVert x-y \rVert}_1 + {\lVert y-z \rVert}_1 + {\lVert z-x \rVert}_1.  
\end{align*}

\medskip

Case 2: Let $x=(0,0), y=(1,0), z=\alpha(1,1)$.
Then
\begin{align*}
  {\lVert Tx-Ty \rVert}_1 + {\lVert Ty-Tz \rVert}_1 + {\lVert Tz-Tx \rVert}_1 
  = 4 \alpha ={\lVert x-y \rVert}_1 + {\lVert y-z \rVert}_1 + {\lVert z-x \rVert}_1. 
\end{align*}

\medskip

Case 3: Let $x=(0,0), y=(0,1), z=\alpha(1,1)$.
Then
\begin{align*}
  {\lVert Tx-Ty \rVert}_1 + {\lVert Ty-Tz \rVert}_1 + {\lVert Tz-Tx \rVert}_1 
  = 4 \alpha ={\lVert x-y \rVert}_1 + {\lVert y-z \rVert}_1 + {\lVert z-x \rVert}_1.  
\end{align*}

\medskip

Case 4: Let $x=(1,0), y=(0,1), z=\alpha(1,1)$.
Then
\begin{align*}
  &{\lVert Tx-Ty \rVert}_1 + {\lVert Ty-Tz \rVert}_1 + {\lVert Tz-Tx \rVert}_1 \\
 & = 4(\alpha -1)\\
  &< 4 \alpha={\lVert x-y \rVert}_1 + {\lVert y-z \rVert}_1 + {\lVert z-x \rVert}_1.
\end{align*}

\medskip

Case 5: Let $x=(0,0), y=\alpha_1(1,1), z=\alpha_2(1,1)$ such that $\alpha_2 < \alpha_1$.
Then
\begin{align*}
  {\lVert Tx-Ty \rVert}_1 + {\lVert Ty-Tz \rVert}_1 + {\lVert Tz-Tx \rVert}_1 = 4 \alpha_1 ={\lVert x-y \rVert}_1 + {\lVert y-z \rVert}_1 + {\lVert z-x \rVert}_1. 
\end{align*} 

\medskip

Case 6: Let $x=(0,1), y=\alpha_1(1,1), z=\alpha_2(1,1)$ such that $\alpha_2 < \alpha_1$. 
Then
\begin{align*}
  &{\lVert Tx-Ty \rVert}_1 + {\lVert Ty-Tz \rVert}_1 + {\lVert Tz-Tx \rVert}_1 \\
  &= 4 \alpha_1 - 4\\
  &<4 \alpha_1 - 2={\lVert x-y \rVert}_1 + {\lVert y-z \rVert}_1 + {\lVert z-x \rVert}_1.
\end{align*}

\medskip

Case 7: Let $x=(1,0), y=\alpha_1(1,1), z=\alpha_2(1,1)$ such that $\alpha_2 < \alpha_1$. This case is analogous to the previous one.

\medskip

Case 8: Let $x=\alpha_1(1,1), y=\alpha_2(1,1), z=\alpha_3(1,1)$ such that $\alpha_3 < \alpha_2 < \alpha_1$.
Then
\begin{align*}
  &{\lVert Tx-Ty \rVert}_1 + {\lVert Ty-Tz \rVert}_1 + {\lVert Tz-Tx \rVert}_1\\ 
  &= 4 \alpha_1 - 4 \alpha_3\\
  &={\lVert x-y \rVert}_1 + {\lVert y-z \rVert}_1 + {\lVert z-x \rVert}_1.
\end{align*}

\medskip

Thus, ${\lVert Tx-Ty \rVert}_1 + {\lVert Ty-Tz \rVert}_1 + {\lVert Tz-Tx \rVert}_1 \leq {\lVert x-y \rVert}_1 + {\lVert y-z \rVert}_1 + {\lVert z-x \rVert}_1$ for all distinct $x,y,z \in K$. Therefore, $T$ is a perimetric nonexpansive mapping. Yet, $T$ fails to be both nonexpansive and quasi-nonexpansive, since for $x=(1,0)$ and $y=(0,0)$, we have ${\lVert Tx-Ty \rVert}_1 = 2$ and ${\lVert x-y \rVert}_1 =1$ and $y$ is a fixed point of $T$. 
\end{example}

The mapping $T$ in the above example has a fixed point, and indeed, it has infinitely many fixed points. However, this may not be true in general, as illustrated by the following examples:
\begin{example}\label{E23}
Let $X= \mathbb{R}$ be a normed linear space equipped with the norm ${\lVert x \rVert}= |x|$, for all $x \in X$. Then the mapping $T: X \to X$ defined by $Tx=x+1$, for all $x \in X$,
 is a perimetric nonexpansive mapping without having a fixed point.
\end{example}

\begin{example}\label{E24}
Let $X= \mathbb{R}^2$ be a normed linear space equipped with the norm ${\lVert (x,y) \rVert}_1 = |x| + |y|$, for all $(x,y) \in X$ and consider a subset $K=[0,1] \times [0,1]$ of $X$. We define a mapping $T:K \to K$ by $T(x,y)= (1-x,1-y)$, for all $(x,y) \in K$. Then $T$ is a perimetric nonexpansive mapping with $(\frac{1}{2}, \frac{1}{2})$ as the unique fixed point of $T$.
\end{example}

\section{Main Results}
In this section, we investigate key properties of perimetric nonexpansive mappings, advancing the theoretical foundations of these mappings.

It is known that some of the existing nonexpansive type mappings are continuous, while the others are not so. Here we prove that perimetric nonexpansive mappings are always continuous.

\begin{theorem}\label{T31}
A perimetric nonexpansive mapping $T$ on a normed linear space $(X,\|\cdot\|)$ is continuous.
\end{theorem}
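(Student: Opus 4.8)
The plan is to establish sequential continuity, which in a normed linear space is equivalent to continuity. Fix $x \in X$ and an arbitrary sequence $(x_n)$ with $x_n \to x$; the goal is to show $Tx_n \to Tx$. For the indices with $x_n = x$ the quantity $\lVert Tx_n - Tx \rVert$ vanishes, so I may freely assume $x_n \neq x$ for every $n$.

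Since inequality \eqref{pnm} is a genuinely three-point condition, it cannot by itself bound the two-point distance $\lVert Tx_n - Tx \rVert$; the device I would use is to feed in an auxiliary third point. Pick any $z \in X$ with $z \neq x$. Because $x_n \to x \neq z$, for all sufficiently large $n$ the three points $x_n, x, z$ are pairwise distinct, so \eqref{pnm} gives
\[
\lVert Tx_n - Tx \rVert + \lVert Tx - Tz \rVert + \lVert Tz - Tx_n \rVert \le \lVert x_n - x \rVert + \lVert x - z \rVert + \lVert z - x_n \rVert.
\]
Now apply the triangle inequality in the form $\lVert Tx - Tz \rVert + \lVert Tz - Tx_n \rVert \ge \lVert Tx_n - Tx \rVert$ to absorb the two auxiliary terms on the left, obtaining
\[
2\,\lVert Tx_n - Tx \rVert \le \lVert x_n - x \rVert + \lVert x - z \rVert + \lVert z - x_n \rVert.
\]

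Next I would let $n \to \infty$ with $z$ held fixed. Since $\lVert x_n - x \rVert \to 0$ and, by continuity of the norm, $\lVert z - x_n \rVert \to \lVert z - x \rVert$, taking $\limsup$ yields $\limsup_n \lVert Tx_n - Tx \rVert \le \lVert x - z \rVert$. This bound holds for every $z \neq x$, so it remains only to let $z$ approach $x$: choosing points $z_k \to x$ with $z_k \neq x$ (which exist whenever $X$ is not the trivial space $\{0\}$, since then some direction $v \neq 0$ lets $x + t v$ approach $x$), I conclude $\limsup_n \lVert Tx_n - Tx \rVert = 0$, that is, $Tx_n \to Tx$. The degenerate case $X = \{0\}$ is vacuous.

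The main obstacle is conceptual rather than computational: because the defining inequality constrains perimeters of triangles, there is no direct one-step estimate for $\lVert Tx - Ty \rVert$, and the whole argument hinges on introducing the auxiliary point $z$ and then isolating $\lVert Tx_n - Tx \rVert$ via the triangle inequality. The accompanying technical care lies in the order of the two limits (first $n \to \infty$ for fixed $z$, then $z \to x$) and in verifying pairwise distinctness of the three points together with the availability of points near $x$ distinct from it.
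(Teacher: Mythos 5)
Your proof is correct and rests on the same key idea as the paper's: introduce an auxiliary third point so that the three-point inequality \eqref{pnm} yields a two-point bound on $\lVert Tx_n - Tx\rVert$. The paper runs the argument in $\varepsilon$--$\delta$ form with the auxiliary point chosen within $\delta$ of the base point (simply adding the two extra nonnegative terms on the left rather than invoking the reverse triangle inequality), whereas you fix an arbitrary $z$ and take a second limit $z \to x$; these are cosmetic variations of the same estimate.
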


\begin{proof}
Let $x^*\in X$ be arbitrary. If $x^*$ is an isolated point of $X$, then it is obvious that $T$ is continuous at $x^*$. 

Now, suppose that $x^*$ is a limit point of $X$. Then for any $\varepsilon>0$, we choose $ \delta>0 $ such that $0<\delta<\frac{\varepsilon}{4}$. 

Since $x^*$ is a limit point of $X$, there exist $y \in X$ with $ x \neq y$ such that $\lVert x^* -y \rVert < \delta$. Then, for all $x \in X$ with $x \neq x^*$ satisfying $\lVert x-x^* \rVert < \delta$, we have
\begin{align*}
\lVert Tx-Tx^* \rVert &\le \lVert Tx-Tx^* \rVert + \lVert Tx^*-Ty \rVert + \lVert Ty-Tx \rVert\\
&\le \lVert x-x^* \rVert + \lVert x^*-y \rVert + \lVert y-x \rVert\\
&\le 2( \lVert x-x^* \rVert + \lVert x^*-y \rVert)\\
& < 4 \delta < \varepsilon.
\end{align*}
Therefore, the conclusion follows.
\end{proof}
The continuity of perimetric nonexpansive mappings allows us to identify a key characteristic of $F(T)$, the set of all fixed points of such mappings.

\begin{theorem}\label{T32}
$F(T)$, the set of all fixed points of a perimetric nonexpansive mapping $T$ on a normed linear space $(X,\|\cdot\|)$ is closed.
\end{theorem}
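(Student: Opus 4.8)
The plan is to leverage the continuity of $T$ just established in Theorem~\ref{T31}. Since $(X,\|\cdot\|)$ is a metric space, a subset is closed precisely when it is sequentially closed, so it suffices to show that whenever a sequence of fixed points converges in $X$, its limit is again a fixed point. This reduces the statement to a short argument combining continuity with the uniqueness of limits.

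Concretely, I would first take an arbitrary sequence $\{x_n\}\subseteq F(T)$ converging to some point $x^*\in X$, and record that $Tx_n=x_n$ for every $n$. Next I would invoke Theorem~\ref{T31}: because $T$ is continuous on $X$ and $x_n\to x^*$, it follows that $Tx_n\to Tx^*$. At the same time, the identity $Tx_n=x_n$ forces $Tx_n\to x^*$. Then, since limits in a normed linear space are unique, I would conclude $Tx^*=x^*$, i.e. $x^*\in F(T)$.

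An equivalent and perhaps cleaner route is to define the auxiliary function $g:X\to[0,\infty)$ by $g(x)=\lVert Tx-x\rVert$. This $g$ is continuous as the composition of the continuous map $x\mapsto Tx-x$ (continuity of $T$ plus continuity of the identity and of subtraction) with the continuous norm. Since $F(T)=g^{-1}(\{0\})$ is the preimage of the closed set $\{0\}$ under a continuous function, it is closed. I would likely present the sequential version as the primary argument, as it most directly uses the theorem just proved.

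I expect no substantive obstacle here; the entire content is carried by Theorem~\ref{T31}, and the remaining steps are routine. The only point deserving a word of care is the reduction from closedness to sequential closedness, which is justified by the metric (indeed normed) structure of $X$; once this is noted, the proof is immediate.
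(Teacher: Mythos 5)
Your proposal is correct and coincides with the paper's own proof: both take a sequence in $F(T)$ converging to a point, apply the continuity of $T$ from Theorem~\ref{T31}, and conclude by uniqueness of limits that the limit is a fixed point. The alternative via $g(x)=\lVert Tx-x\rVert$ is a valid equivalent phrasing but is not the route the paper takes.
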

\begin{proof}
Let $z$ be any adherent point of $F(T)$. Then we get a sequence $\{z_n\}$ in $F(T)$ converging to $z$. Since $T$ is continuous and $Tz_n=z_n$ for all $n$, it follows that $z=Tz$, i.e., $z\in F(T)$. So, $F(T)$ is closed.
\end{proof}

It is easy  to confirm that Theorem \eqref{T12} and Theorem \eqref{T14}  remain valid when the underlying structure is a normed linear space rather than a metric space. Based on these findings, we derive a fixed point existence theorem for perimetric nonexpansive mappings in Banach spaces.

\begin{theorem}\label{T33}
Let $(X,\|\cdot\|)$ be a Banach space and let $K$ be a compact convex subset of $X$ containing the null vector $\theta$. Let $T: K \to K$ be a perimetric nonexpansive mapping with no periodic point of prime period $2$. Then $T$ has a fixed point in $K$.
\end{theorem}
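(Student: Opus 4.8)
The plan is to reduce to Petrov's theorem (Theorem~\ref{T12}, which the text notes remains valid in normed linear spaces) via a contraction-approximation scheme that exploits the convexity of $K$ and the fact that $\theta\in K$. For each $\lambda\in(0,1)$ I would define $T_\lambda\colon K\to K$ by $T_\lambda x=\lambda Tx$. Since $\theta\in K$ and $K$ is convex, the identity $T_\lambda x=\lambda Tx+(1-\lambda)\theta$ exhibits $T_\lambda x$ as a convex combination of the two points $Tx,\theta\in K$, so $T_\lambda$ does map $K$ into $K$. Factoring $\lambda$ out of each norm on the left of \eqref{pnm} and invoking the perimetric nonexpansiveness of $T$ gives, for all pairwise distinct $x,y,z\in K$,
\begin{align*}
\lVert T_\lambda x-T_\lambda y\rVert+\lVert T_\lambda y-T_\lambda z\rVert+\lVert T_\lambda z-T_\lambda x\rVert
&=\lambda\bigl(\lVert Tx-Ty\rVert+\lVert Ty-Tz\rVert+\lVert Tz-Tx\rVert\bigr)\\
&\le\lambda\bigl(\lVert x-y\rVert+\lVert y-z\rVert+\lVert z-x\rVert\bigr),
\end{align*}
so $T_\lambda$ contracts perimeters of triangles with constant $\lambda<1$ in the sense of Definition~\ref{D11}.

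Next I would fix a sequence $\lambda_n\uparrow 1$ and apply Petrov's theorem to each $T_{\lambda_n}$ on the metric space $K$, which is complete because it is compact. (The degenerate possibilities $|K|<3$ are harmless, since a convex set is either a singleton, in which case its unique point is fixed, or infinite.) By Theorem~\ref{T12}, for each $n$ exactly one of two alternatives holds: either $T_{\lambda_n}$ has a fixed point $x_n$, or $T_{\lambda_n}$ possesses a periodic point of prime period $2$, i.e.\ a pair $a_n\neq b_n$ with $b_n=\lambda_n Ta_n$ and $a_n=\lambda_n Tb_n$. As there are infinitely many indices, at least one alternative occurs for infinitely many $n$, and I would treat the two resulting cases separately.

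In the first case I would pass to a convergent subsequence $x_{n_k}\to x^*$ using compactness of $K$; then feeding $x_{n_k}=\lambda_{n_k}Tx_{n_k}$ to the continuity of $T$ from Theorem~\ref{T31}, together with $\lambda_{n_k}\to1$, yields $x^*=Tx^*$. In the second case I would extract subsequences with $a_{n_k}\to a^*$ and $b_{n_k}\to b^*$; the relations $b_{n_k}=\lambda_{n_k}Ta_{n_k}$ and $a_{n_k}=\lambda_{n_k}Tb_{n_k}$ pass to the limit, again by continuity of $T$ and $\lambda_{n_k}\to1$, to give $Ta^*=b^*$ and $Tb^*=a^*$, whence $T^2a^*=a^*$. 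Were $a^*\neq b^*$, this would make $a^*$ a periodic point of $T$ of prime period $2$, contradicting the hypothesis; hence $a^*=b^*$ and $Ta^*=a^*$. In either case $T$ acquires a fixed point.

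The main obstacle I anticipate is the limit step: one must guarantee that the approximating period-$2$ orbits cannot survive in the limit, and this is exactly where the prime-period-$2$ hypothesis on $T$ is spent—the limiting orbit is forced to collapse to a single fixed point. A secondary point needing care is the legitimacy of invoking Petrov's result on the subset $K$ rather than on all of $X$; this rests on $K$ being complete and on the continuity argument of Theorem~\ref{T31} carrying over to $K$, which it does, the three comparison points simply being chosen within $K$.
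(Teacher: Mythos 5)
Your proposal is correct and follows essentially the same route as the paper: approximate $T$ by the scaled maps $\lambda Tx=\lambda Tx+(1-\lambda)\theta$ (which land in $K$ by convexity and $\theta\in K$), verify that these contract perimeters of triangles with constant $\lambda<1$, invoke Theorem~\ref{T12} on the complete space $K$, and pass to the limit via compactness and the continuity of $T$. If anything, your explicit dichotomy for the approximants --- showing that a limit of period-$2$ orbits of $T_{\lambda_n}$ must collapse to a fixed point of $T$ unless it produces a forbidden prime-period-$2$ point --- is handled more carefully than the paper's contradiction argument, which tacitly ignores the possibility that the limiting point $p$ with $T^2p=p$ is already a fixed point.
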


\begin{proof}
For each $n \in \mathbb{N}$, we define the mapping $T_n:K \to K$ by $T_n(x)=(1 - \frac{1}{n}) Tx$ for all $x \in K$. Then  for any distinct $x,y,z \in K$ and for all $n \in \mathbb{N}$, we have 
\begin{align*}
&\lVert T_n(x) -T_n(y)\rVert + \lVert T_n(y) -T_n(z)\rVert +\lVert T_n(z) -T_n(x)\rVert\\
=& \frac{n-1}{n} (\lVert Tx -Ty\rVert + \lVert Tx - Ty\rVert + \lVert Tx -Ty \rVert)\\
\le &\frac{n-1}{n} (\lVert x -y\rVert + \lVert x - y\rVert + \lVert x - y \rVert)  \hspace{2cm} [\text{using } (\ref{pnm})].
\end{align*}
Therefore, $T_n$ is a mapping contracting perimeters of triangles on $K$. 
If possible, suppose that for each $n \in \mathbb{N}$, $T_n$ has a periodic point $p_n \in K$ (say) with prime period $2$. Then we have
\begin{align*}
    T_n^2 (p_n)= p_n \text{ for all } n \in \mathbb{N}.
\end{align*}

Since $K$ is compact, $\{p_n\}$ has a subsequence $\{p_{n_i}\}$ converging to some $p \in K$ as $i \to \infty$. Since $T_n \to T$ uniformly and $T$ is continuous, it follows that $ {T_{n_i}} (p_{n_i}) \to Tp$ as $i \to \infty$. Again applying the uniform convergence of $\{T_{n}\}$, we get $ {T_{n_i}} (T_{n_i}p_{n_i}) \to T (Tp )$, which implies that $ p_{n_i} \to T^2 p$. Thus we have $T^2p = p$, i.e., $p$ is a periodic point of $T$ of prime period $2$,   a contradiction to our assumption.
Hence $T_n$ has no periodic point of prime period $2$ for each $n$. Therefore, by Theorem~\ref{T12}, $T_n$ has a fixed point $x_n$ in $K$, i.e., $T_n x_n=x_n$ for each $n$. As $K$ is compact, there exists a subsequence $\{x_{n_k}\}$ of $\{x_n\}$ which converges to $x^* \in K.$ Since, the sequence of functions $\{T_{n_k}\}$ converges uniformly to $T$ and $T$ is continuous, it follows that $T_{n_k} x_{n_k} \to T x^*$ as $k\to \infty$. On the other hand since $T_{n_k} x_{n_k} =x_{n_k}$, we have $T_{n_k} x_{n_k} \to  x^*$ as $k\to \infty$. Thus, $Tx^*=x^*$, i.e., $x^*$ is a fixed point of $T$.
\end{proof}

Next, we demonstrate that even if the underlying space is merely a normed linear space rather than a Banach space, $T$ can still have a fixed point.

\begin{theorem}\label{T34}
Let $(X,\|\cdot\|)$ be a normed linear space and $K$ be a compact convex subset of $X$ containing the null vector $\theta$. Let $T: K \to K$ be a perimetric nonexpansive mapping and $(I-T)K$ be a closed subset of $X$. If $T$ does not possess periodic points of prime period $2$ and if there exists an $x \in X$ such that the sequence $\{T^n x\}$ has a convergent subsequence which converges to $\xi \in X$, then $\xi$ is a fixed point of $T$. 
\end{theorem}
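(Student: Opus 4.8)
The plan is to monitor the perimeters of the triangles spanned by consecutive iterates and to exploit their monotonicity, pinning down the limit point by means of the continuity of $T$ (Theorem~\ref{T31}) together with the exclusion of prime period-$2$ points. Since $T$ carries $K$ into $K$, the orbit is only meaningful for $x\in K$, so we take $x\in K$; then $x_n:=T^nx$ lies in the compact set $K$ and the limit $\xi$ of the given subsequence belongs to $K$. Put
\[
P_n=\lVert x_n-x_{n+1}\rVert+\lVert x_{n+1}-x_{n+2}\rVert+\lVert x_{n+2}-x_n\rVert .
\]
First I would apply \eqref{pnm} to the triple $(x_n,x_{n+1},x_{n+2})$, whose image is $(x_{n+1},x_{n+2},x_{n+3})$, to get $P_{n+1}\le P_n$; should two of the three points coincide, a short case analysis (invoking the absence of prime period-$2$ points when $x_n=x_{n+2}$) shows that the orbit already meets a fixed point, and then $\xi$ equals that point and we are done. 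Hence we may assume all consecutive triples are nondegenerate, so $\{P_n\}$ is non-increasing and bounded below, and therefore converges to some $P\ge 0$.

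Next I would pass to the limit. By continuity of $T$, from $x_{n_i}\to\xi$ we obtain $x_{n_i+1}\to T\xi$ and $x_{n_i+2}\to T^2\xi$, so that $P_{n_i}\to P(\xi)$, where $P(w):=\lVert w-Tw\rVert+\lVert Tw-T^2w\rVert+\lVert T^2w-w\rVert$. Since the whole sequence $\{P_n\}$ converges to $P$, this gives $P(\xi)=P$; repeating the argument with the index shifted shows $P(T^k\xi)=P$ for every $k\ge 0$. Thus equality is forced in \eqref{pnm} all along the limiting orbit of $\xi$, and the task reduces to deducing $T\xi=\xi$ from this rigidity.

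The degenerate configurations are immediate: if $T\xi=T^2\xi$ then $P(T\xi)=0$, whence $P=0$ and $P(\xi)=2\lVert\xi-T\xi\rVert=0$, giving $\xi=T\xi$; and if $\xi=T^2\xi$ then $\xi$ has period dividing $2$, so the no prime period-$2$ hypothesis again yields $\xi=T\xi$. The essential case is that $\xi,T\xi,T^2\xi$ are pairwise distinct while $P(T\xi)=P(\xi)$, i.e.\ \eqref{pnm} holds with equality at this triple. Here I would try to manufacture a strict decrease by introducing the approximants $T_m=(1-\tfrac1m)T$, which map $K$ into $K$ by convexity and $\theta\in K$ and are genuine perimeter contractions (exactly as in the proof of Theorem~\ref{T33}), and to transfer the strict inequality they enjoy back to the configuration $(\xi,T\xi,T^2\xi)$ so as to contradict $P(T\xi)=P(\xi)$.

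I expect this final step to be the crux and the main obstacle. The perimeter monotonicity only ever produces equality, never strict decrease, along the limiting orbit, so everything hinges on excluding the pairwise-distinct equality configuration—and it is here that the hypotheses beyond mere perimetric nonexpansiveness must all be brought to bear: convexity of $K$ with $\theta\in K$ (to make the contractive approximants $T_m$ available), the closedness of $(I-T)K$, and the absence of prime period-$2$ points. Turning the passage from the non-strict inequality to a strict one into a rigorous contradiction is the delicate heart of the argument, and is where I would concentrate the effort.
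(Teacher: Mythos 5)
Your argument is incomplete at exactly the point you yourself identify as the crux, and the gap is not one that can be closed along the lines you sketch. The orbit--perimeter monotonicity only ever yields the non-strict inequality $P_{n+1}\le P_n$, and the limiting configuration in which $\xi$, $T\xi$, $T^2\xi$ are pairwise distinct with $P(T^k\xi)=P$ for all $k$ genuinely occurs for perimetric nonexpansive mappings: every isometry is perimetric nonexpansive, and a rotation $T$ of the closed unit disk $K\subset\mathbb{R}^2$ about the origin through an angle $2\pi\alpha$ with $\alpha$ irrational satisfies every hypothesis of the theorem ($K$ is compact, convex and contains the null vector; $(I-T)K$ is the image of $K$ under the invertible linear map $I-T$, hence compact and closed; the only solution of $T^2x=x$ is the origin, which is a fixed point, so there are no points of prime period $2$), yet for $x$ on the boundary circle every subsequential limit $\xi$ of $\{T^nx\}$ lies on the circle and is not fixed. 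So no amount of effort will turn the equality configuration into a contradiction, and your plan of transferring strictness from the approximants $T_m=(1-\tfrac1m)T$ back to the triple $(\xi,T\xi,T^2\xi)$ cannot succeed. (This example also shows that the conclusion as literally stated --- that the orbit limit $\xi$ itself is fixed --- is too strong; what is actually provable, and what the paper's proof actually establishes, is that $T$ has \emph{some} fixed point.)

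The paper's proof takes an entirely different and much shorter route: it sets $T_n=t_nT$ with $t_n\uparrow 1$, observes that each $T_n$ is a perimeter contraction in the sense of Edelstein with no points of prime period $2$, obtains a fixed point $x_n$ of $T_n$ from Theorem~\ref{T14} (compactness of $K$ supplying the convergent-subsequence hypothesis there), notes that $\lVert Tx_n-x_n\rVert=(1-t_n)\lVert Tx_n\rVert\to 0$ by boundedness of $K$, and then invokes the closedness of $(I-T)K$ to conclude $\theta\in(I-T)K$, i.e.\ $T\xi=\xi$ for some $\xi\in K$. Note that this is precisely where the hypothesis that $(I-T)K$ is closed enters; your sketch never uses that hypothesis, which is itself a warning sign. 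If you want to salvage your approach, aim for the weaker (correct) conclusion that a fixed point exists, and route the argument through an approximate-fixed-point sequence combined with the closedness of $(I-T)K$ rather than through the orbit of $x$.
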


\begin{proof}
Consider a sequence $\{t_n\}$ of positive real numbers, where $0 < t_n < 1$ for all $n$, and $\lim_{n \to \infty} t_n = 1$. For each $n \in \mathbb{N}$, we define $T_n:K \to K$ by $T_n (x)=t_n Tx$ for all $x \in K$.

Then it is straightforward that for each $n\in \mathbb{N}$, $T_n$ is a mapping contracting perimeters of triangles in the sense of Edelstein.

Suppose that for each $n \in \mathbb{N}$, $T_n$ has a periodic point $p_n$ (say) with prime period $2$. Then, proceeding as in Theorem~\ref{T33}, we can arrive at a contradiction to the fact that $T$ has no periodic point of prime period $2$. Thus, $T_n$ has no periodic point of prime period $2$ for each $n$. Hence, by Theorem~\ref{T14}, $T_n$ admits a fixed point  $x_n \in K$. 

Since $K$ is compact,  it is bounded and so $\lVert Tx \rVert \le r$ for all $x \in K$ for some $r>0$.
For all $n \in \mathbb{N}$, we obtain 
\begin{align*}
&\lVert Tx_n - x_n \rVert = \lVert Tx_n - t_n Tx_n \rVert = \lVert (1-t_n) Tx_n \rVert \le r(1-t_n)\\
\implies &(I-T)(x_n) \to \theta \text{ as } n \to \infty\\
\implies &\theta \in (I-T)K~~ [\text{since $(I-T)K$ is closed} ] .
\end{align*}
Hence, there exists $\xi \in K$ such that $(I-T)(\xi)= \theta$, i.e., $T (\xi)=\xi$.
Hence $\xi$ is a fixed point of $T$.
\end{proof}

The above proof reveals that the compactness of the domain of $T$ is essential for guaranteeing the existence of fixed points. We now demonstrate that in a Hilbert space, the compactness requirement on the domain can be relaxed to closedness for ensuring the existence of fixed points.
\begin{theorem}\label{T35}
Let $K$ be a closed, bounded, convex subset of the Hilbert space $H$ and  $T: K \to K$ be a perimetric nonexpansive mapping. Then $T$ has a fixed point.
\end{theorem}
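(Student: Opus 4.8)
The plan is to prove slightly more than what is asked: I will show that \emph{on a convex domain the perimetric condition collapses to ordinary nonexpansiveness}, and then invoke the classical Browder--Göhde--Kirk fixed point theorem of \cite{B65,G65,K65}. Everything hinges on one limiting observation, which I isolate first.

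First I would dispose of the case $|K|=1$, where the statement is immediate, and assume $K$ has at least two points. A convex set with at least two points has no isolated points: given $x\in K$ and any other point $y\in K$, the segment points $z_s:=(1-s)x+sy$ lie in $K$ for every $s\in(0,1)$, are distinct from both $x$ and $y$, and satisfy $z_s\to x$ as $s\to 0^+$. Moreover, the argument of Theorem~\ref{T31}---which uses only the perimetric inequality on triples drawn from the domain together with the fact that the base point is a limit point---shows that $T$ is continuous on $K$, so that $Tz_s\to Tx$.

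The heart of the proof is then a single computation. Fixing distinct $x,y\in K$ and applying \eqref{pnm} to the three pairwise distinct points $x,y,z_s$ gives
\[
\|Tx-Ty\|+\|Ty-Tz_s\|+\|Tz_s-Tx\|\ \le\ \|x-y\|+\|y-z_s\|+\|z_s-x\|.
\]
Letting $s\to 0^{+}$, continuity yields $\|Ty-Tz_s\|\to\|Ty-Tx\|$ and $\|Tz_s-Tx\|\to 0$, while $\|y-z_s\|=(1-s)\|y-x\|\to\|y-x\|$ and $\|z_s-x\|=s\|y-x\|\to 0$. The surviving terms leave $2\|Tx-Ty\|\le 2\|x-y\|$, that is, $\|Tx-Ty\|\le\|x-y\|$; hence $T$ is nonexpansive on $K$ in the usual sense.

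Finally, since $K$ is a nonempty closed bounded convex subset of the Hilbert space $H$ and $T:K\to K$ is nonexpansive, the Browder--Göhde--Kirk theorem delivers a fixed point, which is precisely the assertion. The step I expect to require the most care is this passage to the limit, and in particular the realization that it depends critically on convexity: convexity is exactly what makes the auxiliary points $z_s$ available inside the domain. On a non-convex domain these points may be missing---this is what occurs at the isolated point $(1,0)$ in Example~\ref{E22}, where $T$ genuinely fails to be nonexpansive---so it is the convexity hypothesis that forces perimetric nonexpansiveness to coincide with nonexpansiveness here. (One could instead imitate Theorems~\ref{T33}--\ref{T34}, approximating $T$ by the perimetric contractions $T_tx=(1-t)u+tTx$ for a fixed anchor $u\in K$ and passing to the limit via weak compactness of $K$ together with demiclosedness of $I-T$; but the reduction above renders that machinery unnecessary.)
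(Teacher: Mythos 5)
Your proof is correct, and it takes a genuinely different---and considerably shorter---route than the paper's. The central observation is sound: on a convex $K$ with at least two points, every point is a limit of segment points, so $T$ is continuous on $K$ by the argument of Theorem~\ref{T31}, and applying \eqref{pnm} to the pairwise distinct triple $x$, $y$, $z_s=(1-s)x+sy$ and letting $s\to 0^{+}$ does yield $2\lVert Tx-Ty\rVert\le 2\lVert x-y\rVert$; the limits $\lVert y-z_s\rVert\to\lVert y-x\rVert$, $\lVert z_s-x\rVert\to 0$, $\lVert Tz_s-Tx\rVert\to 0$ are all as you compute. Thus on convex domains perimetric nonexpansiveness collapses to ordinary nonexpansiveness and the Browder--G\"ohde--Kirk theorem finishes the proof. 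The paper instead re-runs the standard approximation machinery inside the perimetric framework: it sets $U_n(x)=(1-s_n)x_0+s_nTx$ with $s_n\uparrow 1$, obtains fixed points $u_n$ of these perimetric contractions, extracts a weakly convergent subsequence $u_{n_k}\rightharpoonup p$ from the weak compactness of $K$, shows $\lVert Tu_{n_k}-u_{n_k}\rVert\to 0$, and then deduces $Tp=p$ via an Opial-type identity together with an application of \eqref{pnm} to the triple $u_{n_k},p,u_{n_{k+1}}$. Your reduction buys brevity and a structural insight the paper does not record---on convex sets the new class adds nothing beyond nonexpansiveness, so Theorem~\ref{T35} (and likewise Theorem~\ref{T33}, where Schauder's theorem would already suffice once continuity is known) is a corollary of classical results; the paper's argument buys self-containedness and stays entirely within the perimetric toolkit, at the cost of length and of extra verifications (e.g.\ that each $U_n$ is free of period-$2$ points before its fixed point is claimed). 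Two small points to polish in your write-up: state explicitly that $K$ is nonempty, and, in the continuity step, note that if the auxiliary nearby point coincides with the variable point one simply replaces it by another segment point, which convexity supplies in abundance.
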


\begin{proof}
Let $\{s_n\}$ be a sequence of positive real numbers satisfying $s_n < 1$ for each $n$, with the property that $\lim_{n \to \infty} s_n = 1$.
Let $x_0 \in K$  be fixed. Then since $K$ is convex, for each $n\in \mathbb{N}$, the mapping  $U_n: K \to K$ defined by 
\begin{align*}
U_n(x)= (1-s_n)x_0 +s_nTx, \text{ for all } x \in K
\end{align*} is well defined. Moreover, $U_n$ is a mapping contracting perimeter of triangles on $K$, and given that $K$ is closed, it follows that $U_n$ has a fixed point, say $u_n$ for each $n$.

Since $K$ is closed, convex and bounded in the Hilbert space $K$, it is weakly compact. Then we  get a subsequence $\{u_{n_k}\}$ of $\{u_n\}$ such that $\{u_{n_k}\}$ converges weakly to an element $p \in K$. Next, we show that $p$ is a fixed point of $T$.

For any  $u \in K$, we have 
\begin{align*}
{\lVert u_{n_k} - u \rVert}^2 
&= {\lVert u_{n_k} -p + p - u \rVert}^2\\
&= {\lVert u_{n_k} - p \rVert}^2 + {\lVert p - u \rVert}^2 + 2\langle u_{n_k} -p, p-u \rangle.
\end{align*}
Here $\langle u_{n_k} -p, p-u \rangle \to \theta$ as $k \to \infty$ as $u_{n_k} - p$ converges to $\theta$ weakly in $K$.
Then taking $u=Tp$, we have
\begin{align*}
\lim_{k \to \infty} ({\lVert u_{n_k} - Tp \rVert}^2 - {\lVert u_{n_k} - p \rVert}^2) = {\lVert Tp - p \rVert}^2.
\end{align*}
Since $s_{n_k} \to 1$ and $U_{n_k}u_{n_k}=u_{n_k}$, we have
\begin{align*}
Tu_{n_k} - u_{n_k} &= (s_{n_k} Tu_{n_k}+ (1-s_{n_k})x_0) - u_{n_k} + (1-s_{n_k})(Tu_{n_k} - x_0)\\
&=(U_{n_k}u_{n_k} -u_{n_k}) + (1-s_{n_k})(Tu_{n_k} - x_0)\\
&=\theta + (1- s_{n_k})(Tu_{n_k} - x_0). 
\end{align*}
Thus, we have $\lim_{k \to \infty} \lVert Tu_{n_k} - u_{n_k} \rVert =0$.
Finally, we obtain
\begin{align*}
&\lVert u_{n_k} - Tp \rVert \le \lVert u_{n_k} - Tu_{n_k} \rVert+ \lVert Tu_{n_k} - Tp \rVert + \lVert Tp - Tu_{n_{k+1}} \rVert+ \lVert Tu_{n_{k+1}} - Tu_{n_k} \rVert\\
\implies &\lVert u_{n_k} - Tp \rVert \le \lVert u_{n_k} - Tu_{n_k} \rVert+ \lVert u_{n_k} - p \rVert + \lVert p - u_{n_{k+1}} \rVert+ \lVert u_{n_{k+1}} - u_{n_k} \rVert\\
\implies & \lim_{k \to \infty} \lVert u_{n_k} - Tp \rVert \le \lim_{k \to \infty} \lVert u_{n_k} - Tu_{n_k} \rVert =0\\
\implies & \lVert p - Tp \rVert =0\\
\implies &Tp=p.
\end{align*}
Hence, $p$ is a fixed point of $T$.
\end{proof}

\begin{Acknowledgement}
The first author of the paper would like to express gratitude to the Council of Scientific and Industrial Research, Government of India (File No: 09/0973(16599)/2023-EMR-I) for financially supporting in carrying out this work.  
\end{Acknowledgement}


\begin{thebibliography}{10}

\bibitem{B22}
S.~Banach.
\newblock Sur les op\'erations dans les ensembles abstraits et leur application aux \'equations int\'egrales.
\newblock {\em Fund. Math.}, 3:133--181, 1922.

\bibitem{BMD24}
A.~Banerjee, P.~Mondal, and L.~K. Dey.
\newblock {K}annan-type and {C}hatterjea-type perimetric contraction on quadrilaterals.
\newblock {\em Communicated}, 2024.

\bibitem{BMD25}
A.~Banerjee, P.~Mondal, and L.~K. Dey.
\newblock Perimetric contraction on quadrilaterals.
\newblock {\em To appear in Science {\&} Technology Asia}, 2025.

\bibitem{BPS25}
C.~Bey, E.~Petrov, and R.~Salimov.
\newblock On three-point generalizations of {B}anach and {E}delstein fixed point theorems.
\newblock {\em Filomat}, 39(1):185--195, 2025.

\bibitem{B65}
F.E. Browder.
\newblock Nonexpansive nonlinear operators in a {B}anach space.
\newblock {\em Proc. Natl. Acad. Sci. USA}, 54(4):1041--1044, 1965.

\bibitem{G65}
D.~Göhde.
\newblock Zum prinzip der kontraktiven abbildung.
\newblock {\em Math. Nachr.}, 30(4):251–258, 1965.

\bibitem{K65}
W.~A. Kirk.
\newblock A fixed point theorem for mappings which do not increase distances.
\newblock {\em The American mathematical monthly}, 72(9):1004--1006, 1965.

\bibitem{PP24}
C.M. P{\u{a}}curar and O.~Popescu.
\newblock Fixed point theorem for generalized {C}hatterjea type mappings.
\newblock {\em Acta. Math. Hungar.}, 1-10, 2024.

\bibitem{P23}
E.~Petrov.
\newblock Fixed point theorem for mappings contracting perimeters of triangles.
\newblock {\em J. Fixed Point Theory Appl.}, 25:74, 2023.

\bibitem{P25}
E.~Petrov.
\newblock Periodic points of mappings contracting total pairwise distances.
\newblock {\em J. Fixed Point Theory Appl.}, 27:33, 2025.

\bibitem{PB24}
E.~Petrov and R.~K. Bisht.
\newblock Fixed point theorem for generalized {K}annan type mappings.
\newblock {\em Rend. Circ. Mat. Palermo, II. Ser}, pages 1--18, 2024.

\end{thebibliography}
\bibliographystyle{plain}

\end{document}